\documentclass[a4paper, 11pt]{amsart}

\usepackage{amsfonts, amsthm, amssymb, amsmath, stmaryrd}
\usepackage{amscd} 
\usepackage{mathrsfs,array}
\usepackage{lmodern}
\usepackage{mathtools}

\usepackage{tabularx} 
\usepackage[all,cmtip,line]{xy}
\usepackage{enumerate}
\usepackage{hyperref}
\usepackage{bm}

\newtheorem{thm}{\textbf{Theorem}}[section]

\newtheorem{prop}[thm]{\textbf{Proposition}}
\newtheorem{lem}[thm]{\textbf{Lemma}}
\newtheorem{corollary}[thm]{\text{Corollary}}
\newtheorem{rem}[thm]{\textbf{Remark}}

\def\A{\mathbb{A}}

\def\R{\mathbb{R}}
\def\Z{\mathbb{Z}}

\def\A{\mathbb{A}}
\def\cN{\mathcal{N}}

\newcommand{\cL}{\mathcal{L}}

\def\cS{\mathcal{S}}

\def\inf{\mathrm{inf}}

\def\pp{\mathfrak{p}}
\def\m{\mathfrak{m}}
\def\a{\mathfrak{a}}

\newcommand{\lb}{\llbracket}
\newcommand{\rb}{\rrbracket}

\textwidth = 14.3cm
\hoffset = -20pt
\textheight = 615pt

\usepackage{color}

\begin{document}

\title{$\A_{\inf}$ is infinite dimensional}

\author{Jaclyn Lang}
\address{LAGA, UMR 7539, CNRS, Universit\'e Paris 13 - Sorbonne Paris Cit\'e, Universit\'e Paris~8}
\email{lang@math.univ-paris13.fr}

\author{Judith Ludwig}
\address{IWR, University of Heidelberg, Im Neuenheimer Feld 205, 69120 Heidelberg, Germany}
\email{judith.ludwig@iwr.uni-heidelberg.de}

\subjclass[2010]{}
\keywords{}

\maketitle
\begin{abstract} Given a perfect valuation ring $R$ of characteristic $p$ that is complete with respect to a rank-$1$ nondiscrete valuation, we show that the ring $\A_{\inf}$ of Witt vectors of $R$ has infinite Krull dimension.
\end{abstract}
\section{Introduction}
Fix a prime $p$.  Let $R$ be a perfect valuation ring of characteristic $p$ and denote the valuation by $v$. Assume $v$ is of rank $1$ and nondiscrete and that $R$ is complete with respect to $v$. Let $\A: =\A_{\inf} \coloneqq W(R)$ be the ring of Witt vectors of $R$.  This ring plays a central role in $p$-adic Hodge theory as it is the basic ring from which all of Fontaine's $p$-adic period rings are built. It is also central to the construction of the (adic) Fargues--Fontaine curve \cite{FarguesFontaine}.  Recently, Bhatt, Morrow and Scholze constructed $\A_{\inf}$-cohomology, a cohomology theory that specializes to \'etale, de Rham and crystalline cohomology \cite{BMS}. In these works there is a useful analogy between $\A$ and a 2-dimensional regular local ring.  In this paper we prove the following theorem.

\begin{thm}\label{main thm}
The ring $\A$ has infinite Krull dimension.
\end{thm}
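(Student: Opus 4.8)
The plan is to show that the ring $\A[1/p]$ already has infinite Krull dimension; this suffices. Indeed, $\A$ is $p$-adically separated, so there is no prime strictly between $(0)$ and $(p)$ — such a $\pp$ would satisfy $\pp=p\pp$ and hence $\pp\subseteq\bigcap_n p^n\A=0$. Also $\A$ is local with maximal ideal $\m_{\A}$, the preimage of $\m_R$ under $\A\twoheadrightarrow\A/p=R$ (units lift from $\A/p$ since $\A$ is $p$-adically complete), and $R$ has Krull dimension one; hence the only primes of $\A$ containing $p$ are $(p)$ and $\m_{\A}$, and every prime of $\A$ lies below $\m_{\A}$. Therefore $\dim\A=\max\bigl(2,\,1+\dim\A[1/p]\bigr)$, and it is enough to produce, for each $n$, a chain of $n$ primes of $\A$ not containing $p$.

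The main tool is the family of Gauss valuations. For $\rho>0$ the rule $v_\rho\bigl(\sum_n[x_n]p^n\bigr):=\inf_n\bigl(v(x_n)+n\rho\bigr)$ (writing $v$ additively) defines a valuation on $\A$ — multiplicative, equal to $\infty$ only on $0$ — which extends to $\A[1/p]$ with $v_\rho(p)=\rho$. As $\rho\to0^+$ these degenerate to the valuation $v_0\bigl(\sum_n[x_n]p^n\bigr)=\inf_n v(x_n)$, with $v_0(p)=0$; although $v_0$ is not continuous for the weak topology and so defines no point of $\spa(\A)$, its center $\mathfrak c:=\{f\in\A:v_0(f)>0\}$ is a genuine prime of $\A$. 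One checks that $\mathfrak c$ lies strictly between $(0)$ and the ``fat'' prime $W(\m_R):=\ker(\A\to W(k))$, where $k=R/\m_R$: clearly $\mathfrak c\subseteq W(\m_R)$, whereas $g:=\sum_{m\ge0}[\varpi^{1/p^m}]\,p^m$ — which makes sense because $R$ is perfect, and which has coordinate valuations $v(\varpi)/p^m\to0$ because $v$ is non-discrete — lies in $W(\m_R)\setminus\mathfrak c$. This already shows $\dim\A\ge3$; note that the ``extra'' prime $\mathfrak c$ exists precisely because $v$ is not discrete (were $v$ discrete one would have $\mathfrak c=W(\m_R)$ and the chain would not grow).

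To obtain chains of arbitrary length $n$ I would iterate this mechanism. One natural route: the residue field $\kappa$ of $v_0$ contains $W(k)[1/p]$, and — this is the key expected ingredient — it has infinite transcendence degree over it, witnessed by the images of elements $h_i:=\sum_{m\ge0}[\varpi_i^{1/p^m}]\,p^m$ attached to algebraically independent elements $\varpi_1,\dots,\varpi_n\in R$ (infinitely many such exist, e.g.\ inside a power-series subring $k[[\varpi]]\subseteq R$). One then refines $v_0$ by a rank-$n$ valuation ring of $\kappa$ assembled from $n$ such transcendental residues (a composite ``$h_1$-adic, then $h_2$-adic, $\dots$'' valuation), pulls the resulting rank-$(n{+}1)$ valuation of $\operatorname{Frac}(\A)$ back to $\A[1/p]$, and reads off the centers of its successive coarsenings as a chain of $n+1$ primes below $\m_{\A}$, none containing $p$. (An equivalent, more concrete packaging mimics Arnold's theorem that a power-series ring $V[[X]]$ over a non-discrete rank-one valuation ring $V$ has infinite Krull dimension — failure of the ``strong finite type'' condition for $\m_V$ — with $p\in\A$ in the role of $X$ and $\m_R$ in the role of $\m_V$.) The main obstacle is the verification of the strict containments, i.e.\ that the chosen residues are algebraically independent over $W(k)[1/p]$ — equivalently, that certain explicit elements of $\A$ avoid certain ideals. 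This is delicate because addition of Witt vectors is not coordinatewise, so a putative algebraic relation would have to cancel ``across coordinates''; ruling this out forces one to estimate the $v_\rho$ of the relevant polynomials in the $h_i$ and to keep careful track of how the coordinates interact under multiplication and Witt addition. It is exactly at this point that the non-discreteness of $v$ is indispensable.
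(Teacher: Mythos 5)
Your reduction and your ``dimension $\geq 3$'' step are fine: the Gauss valuations $v_\rho$ are indeed multiplicative, your prime $\mathfrak c=\{f: v_0(f)>0\}$ is exactly the ideal $\pp=\bigcup_k[\varpi^{1/p^k}]\A$ of the paper, and the bookkeeping $\dim\A=\max(2,1+\dim\A[1/p])$ is correct. But this only reproduces the Bhatt--Kedlaya observation that the paper cites as the starting point. The actual content of the theorem --- arbitrarily long chains of primes between $\pp$ and $W(\m)$ --- is in your write-up carried entirely by the assertion that the residue field of $v_0$ has infinite transcendence degree over $W(\kappa)[1/p]$, witnessed by residues of explicit elements $h_i=\sum_m[\varpi_i^{1/p^m}]p^m$, and you yourself flag this as ``the key expected ingredient'' and ``the main obstacle.'' That is not a peripheral verification; it is the whole theorem. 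Proving that a polynomial relation $P(h_1,\dots,h_n)\in\pp$ with Witt-vector coefficients is impossible requires precisely the control of how Teichm\"uller coordinates mix under Witt addition and multiplication that you defer, and nothing in your choice of ``algebraically independent $\varpi_1,\dots,\varpi_n\in R$'' obviously transfers to algebraic independence of the residues $\bar h_i$ (independence in $R$ is a statement about one coordinate; the relation you must exclude cancels across all coordinates). Secondary points also need justification: the embedding $\kappa\lb\varpi\rb\subseteq R$ presupposes a coefficient field, and even granted independence you must choose the rank-$n$ refinement so that the $\bar h_i$ separate its convex subgroups in order for the centers on $\A/\pp$ (not just on the valuation ring of $\kappa$) to be distinct --- workable, but again unargued.

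For comparison, the paper avoids any transcendence statement. It uses a single $\varpi$ and builds, for each $n$, the element $h_n=\sum_i[\varpi^{1/p^{i^{2^{n-1}}}}]p^i$, whose Newton polygon decays at a carefully calibrated rate, together with the multiplicative sets $\cS_n=\{g: 0<\cN(g)\leq\cN(h_n^m)\text{ for some }m\}$. Arnold's inductive scheme (Proposition \ref{strategy prop} and the proof of the theorem in Section \ref{strategy section}) then manufactures the chain $\pp_1\subset\cdots\subset\pp_n$ with $\pp_n\cap\cS_n=\emptyset$, and the hard Witt-vector input is isolated in concrete Newton-polygon estimates (Propositions \ref{check hyp1}, \ref{multiplication raises np}, \ref{Newt polys of sums}, via Lemma \ref{witt vector addition lemma} on the addition polynomials $S_n$). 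If you want to salvage your valuation-theoretic packaging, you would still have to prove quantitative statements of exactly this kind about $v_0$ (equivalently about $\cN$) for explicit elements; as it stands, the proposal has a genuine gap at its central step.
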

Bhatt \cite[Warning 2.24]{Bhatt} and Kedlaya \cite[Remark 1.6]{Kedlaya} note that the Krull dimension of $\A$ is at least 3.  To see this, fix a pseudouniformizer $\varpi \in R$ and let $\kappa$ denote the residue field of $R$.  Let $W(\m)$ be the kernel of the natural map $W(R) \to W(\kappa)$ and $[-] \colon R \to W(R)$ the Teichm\"uller map. Then Bhatt and Kedlaya point out that $\A$ contains the following explicit chain of prime ideals 
\[
(0) \subset \pp \coloneqq \bigcup_{k = 0}^\infty [\varpi^{1/p^k}]\A \subset W(\mathfrak{m}) \subset (p,W(\mathfrak{m})).
\]
As suggested in \cite[Remark 1.6]{Kedlaya}, we use Newton polygons to find an infinite chain of prime ideals between $\pp$ and $W(\mathfrak{m})$. 

The equal characteristic analogue of Theorem \ref{main thm} is the statement that the power series ring $R\lb X\rb$ has infinite Krull dimension.  This was first proved by Arnold \cite[Theorem 1]{Arnold}, and the structure of our argument is very similar to his.  We axiomatize Arnold's argument in Section \ref{strategy section}.  

\textbf{Notation.} We use the convention that the symbols $<, >, \subset, \supset$ denote strict inequalities and inclusions with the exception that we allow the statement ``$\infty < \infty$" to be true.  Otherwise, if equality is allowed it will be explicitly reflected in the notation using the symbols $\leq, \geq, \subseteq, \supseteq$.  An inequality between two $(\R \cup \{\pm \infty\})$-valued functions means that the inequality holds point-wise.

\textbf{Acknowledgements.}  
We thank Kevin Buzzard for helpful comments on an earlier draft.  The first author gratefully acknowledges support from the National Science Foundation through award DMS-1604148.  

\section{Review of Newton polygons}
As above let $R$ be a perfect valuation ring of characteristic $p$ that is complete with respect to a nondiscrete valuation $v$ of rank 1. Let $\mathfrak{m}$ be the maximal ideal of $R$, and fix an element $\varpi \in \mathfrak{m}$ of valuation $v(\varpi)=1$.

Let $\A:= W(R)$ be the ring of Witt vectors of $R$.  Write $[-] \colon R \to \A$ for the Teichm\"uller map, which is multiplicative.  Recall that every element of $\A$ can be written uniquely in the form $\sum_{n \geq 0} [x_n]p^n$ with $x_n \in R$.

As in \cite[Section 1.5.2]{FarguesFontaine}, given $f \in \A$ with $f = \sum_{n\geq 0} [x_n] p^n$, we define the \textit{Newton polygon} $\cN(f)$ of $f$ as the largest decreasing convex polygon in $\R^2$ lying below the set of points $\{(n,v(x_n)) \colon n \geq 0\}$.  We shall often view $\cN(f)$ as the graph of a function $\cN(f) \colon \R \to \R \cup \{+\infty\}$.  In particular, if $n_f$ is the smallest integer such that $x_{n_f} \neq 0$, then $\cN(f)(t)=+ \infty$ for $t< n_f$ and $\cN(f)(n_f) = v(x_{n_f})$. Furthermore, $\lim_{t\rightarrow \infty} \cN(f)(t) = \inf_n v(x_n)$.  

Following the conventions in \cite[Section 1.5.2]{FarguesFontaine}, for any integer $i \geq 0$ define 
\[s_i(f) \coloneqq \cN(f)(i) - \cN(f)(i+1).\]
We call $s_i(f)$ the \textit{slope} of $\cN(f)$ on the interval $[i, i+1]$ even though one would typically call that slope $-s_i(f)$.  With this convention, the slopes form a nonnegative decreasing sequence; that is, $s_i(f) \geq s_{i+1}(f) \geq 0$ for all $i$.  We say that $n$ is a \textit{node} of $\cN(f)$ if $\cN(f)(n) = v(x_n)$.  %(Note that this does not necessarily imply that $n$ is a break point in $\cN(f)$.) 

We recall the theory of Legendre transforms from \cite[Section 1.5.1]{FarguesFontaine}.  Given a function $\varphi \colon \R \to \R \cup \{+\infty\}$ that is not identically equal to $+\infty$, define
\begin{align*}
\cL(\varphi) \colon \R &\to \R \cup \{-\infty\}\\
\lambda &\mapsto \inf_{t \in \R} \{\varphi(t) + \lambda t\}.
\end{align*}
If $\varphi$ is a convex function then one can recover $\varphi$ from $\cL(\varphi)$ via the formula
\[
\varphi(t) = \sup_{\lambda \in \R} \{\cL(\varphi)(\lambda) - t\lambda\}.
\]
From these definitions it is easy to see that $\cN(f) \leq \cN(g)$ if and only if $\cL(\cN(f)) \leq \cL(\cN(g))$.    

As explained in \cite[Section 1.5]{FarguesFontaine}, for any $f, g \in \A$ we have 
\begin{equation}\label{adding Legendre transforms}
\cL(\cN(fg)) = \cL(\cN(f)) + \cL(\cN(g)).
\end{equation}
Motivated by this, one defines a convolution product on the set of $(\R \cup \{+\infty\})$-valued convex function on $\R$ that are not identically $+\infty$ by
\[
(\varphi*\psi)(t) \coloneqq \sup_{\lambda \in \R}\{\cL(\varphi)(\lambda) + \cL(\psi)(\lambda) - t\lambda\}.
\]
Thus we have $\cN(fg) = \cN(f)*\cN(g)$.  In particular if $\cN(f) > 0$, then $\cN(f^m) < \cN(f^{m+1})$ for all $m \geq 1$, and for any $t \in \R$ we have $\lim_{m \to \infty} \cN(f^m)(t) = +\infty$.  

There is another way of describing $\cN(fg)$ in terms of $\cN(f)$ and $\cN(g)$ without explicitly using Legendre transforms.  Write $f = \sum_{n \geq 0}[x_n]p^n$ and $g = \sum_{n \geq 0} [y_n]p^n$, and let $n_f$ (respectively, $n_g$) be the smallest integer such that $x_n \neq 0$ (respectively, $y_n \neq 0$).  Then $\cN(fg)(t) = +\infty$ for all $t < n_f+n_g$, and $\cN(fg)(n_f+n_g) = v(x_{n_f}) + v(y_{n_g})$.  The slopes of $\cN(fg)$ are given by interlacing the slopes of $\cN(f)$ and $\cN(g)$.  That is, the slope sequence of $\cN(fg)$ is given by combining the sequences $\{s_i(f) \colon i \geq 0\}$ and $\{s_i(g) \colon i \geq 0\}$ into a single decreasing sequence that incorporates all positive elements of both sequences.  %This is the slope sequence of $\cN(fg)$.  
The relationship between this description and equation \eqref{adding Legendre transforms} is explained in \cite[Section 1.5]{FarguesFontaine}.

\begin{lem}\label{inequalities far out}
Let $f$ be an element of $\A$ such that $\cN(f) > 0$.  If $g \in \A$ and $t_0 \geq 0$ such that for all $t \geq t_0$ we have $\cN(g)(t) \leq \cN(f)(t)$, then for all $m$ sufficiently large we have $\cN(g) \leq \cN(f^m)$.
\end{lem}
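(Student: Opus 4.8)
The plan is to check the desired inequality $\cN(g) \le \cN(f^m)$ — which by the paper's convention means $\cN(g)(t) \le \cN(f^m)(t)$ for every $t \in \R$ — separately on the two ranges $t \ge t_0$ and $t < t_0$, taking $m$ large only in order to handle the second range.

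On the range $t \ge t_0$ the bound holds for every $m \ge 1$. Since $\cN(f) > 0$, the discussion preceding the lemma shows that the sequence $\cN(f), \cN(f^2), \cN(f^3), \dots$ is strictly increasing, so in particular $\cN(f) = \cN(f^1) \le \cN(f^m)$; combined with the hypothesis this gives $\cN(g)(t) \le \cN(f)(t) \le \cN(f^m)(t)$ for all $t \ge t_0$.

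On the range $t < t_0$ I would use two facts recorded just before the lemma: each polygon $\cN(f^m)$ is non-increasing (its slopes $s_i(f^m)$ are nonnegative), and $\cN(f^m)(t) \to +\infty$ as $m \to \infty$ for every fixed $t$. Monotonicity upgrades the second fact to a uniform statement on the ray $(-\infty, t_0]$: for all such $t$ one has $\cN(f^m)(t) \ge \cN(f^m)(t_0)$, and $\cN(f^m)(t_0) \to +\infty$. On the side of $g$, note $g \ne 0$ (otherwise $\cN(g) \equiv +\infty$ would contradict the hypothesis), so $\cN(g)$ is a genuine non-increasing polygon and $\cN(g)(t) \le \cN(g)(n_g) =: c < \infty$ for all $t \ge n_g$. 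Choosing $m_0$ with $\cN(f^{m_0})(t_0) > c$, we obtain for every $m \ge m_0$ and every $t$ with $n_g \le t < t_0$ the chain $\cN(g)(t) \le c < \cN(f^{m_0})(t_0) \le \cN(f^m)(t_0) \le \cN(f^m)(t)$. There remains the region $t < n_g$, where $\cN(g)(t) = +\infty$ and we must check $\cN(f^m)(t) = +\infty$, i.e.\ that $t < n_{f^m}$; since $n_{f^m} = m\,n_f$, this holds for all $m$ sufficiently large.

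I expect the only delicate points to be the bookkeeping around the ``infinite'' parts of the two polygons — the interval $t < n_g$ on which $\cN(g) = +\infty$, and checking that the sub-ranges fit together — while the genuine inequality that is needed, namely that $\cN(f^m)$ diverges uniformly on $(-\infty, t_0]$, comes essentially for free: because $\cN(f^m)$ is non-increasing, its infimum over that ray is attained at $t_0$, and $\cN(f^m)(t_0) \to +\infty$ is exactly what the discussion before the lemma provides.
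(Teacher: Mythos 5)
Your proof is correct and is essentially the paper's argument: the same split at $t_0$, with $\cN(g)\le\cN(f)\le\cN(f^m)$ on $[t_0,\infty)$ coming from the monotone increase of $\{\cN(f^m)\}_m$, and with the range $t<t_0$ handled by the divergence $\cN(f^m)\to+\infty$. The paper simply asserts that this convergence is uniform on the compact interval $[0,t_0]$; you supply the justification (each $\cN(f^m)$ is non-increasing, so its infimum over $(-\infty,t_0]$ is $\cN(f^m)(t_0)$, which tends to $+\infty$), and you are more explicit than the paper about the locus where $\cN(g)=+\infty$. So this is the same route, carried out a bit more carefully.

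One caveat on your final sub-step: the claim that $t<n_{f^m}=m\,n_f$ holds for all large $m$ whenever $t<n_g$ is only valid if $n_f\ge 1$, or if $n_g=0$ (in which case the region is $t<0$ and $t<0\le m\,n_f$ is automatic). If $n_f=0$ and $n_g>0$ the claim fails --- but in that configuration the lemma as literally stated is itself false: take $f=[\varpi]$, $g=p[\varpi^{1/p}]$, $t_0=1$; the hypothesis holds, yet $\cN(g)(0)=+\infty$ while $\cN(f^m)(0)=m$ for every $m$. The paper's own proof glosses over the same corner (its uniform-convergence step tacitly assumes $\cN(g)$ is bounded on $[0,t_0]$, i.e.\ $n_g=0$), and in the applications (Corollaries \ref{check hyp1 corollary} and \ref{check hyp3}) the relevant $g$ has $n_g=0$, so nothing is lost. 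Still, you should phrase the $t<n_g$ case conditionally (either $n_g=0$, or $n_f\ge1$ and then take $m\ge n_g/n_f$) rather than asserting it outright, since as written that one sentence is not true in general.
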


\begin{proof}
As noted above, since $\cN(f) > 0$, the sequence $\{\cN(f^m)\}_m$ converges to $+\infty$.  This convergence is uniform on the compact interval $[0, t_0]$.  Thus for $m$ sufficiently large, it follows that $\cN(g)(t) \leq \cN(f^m)(t)$ for all $t \in [0, t_0]$.  On the other hand, for all $t \geq t_0$ we have
\[
\cN(g)(t) \leq \cN(f)(t) < \cN(f^m)(t).
\]
Thus $\cN(g) \leq \cN(f^m)$ for all $m$ sufficiently large.
\end{proof}

\begin{prop}\label{p is prime}
The ideal $\pp \coloneqq \bigcup_{k = 0}^\infty [\varpi^{1/p^k}]\A$ is a prime ideal of $\A$. 
\end{prop}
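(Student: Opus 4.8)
The plan is to realize $\pp$ as the vanishing locus of a function that is additive under multiplication, which I will read off from Newton polygons. First I would note that $\pp$ is an \emph{increasing} union of principal ideals: since $[-]$ is multiplicative, $[\varpi^{1/p^k}] = [\varpi^{1/p^{k+1}}]^p$, so $[\varpi^{1/p^k}]\A \subseteq [\varpi^{1/p^{k+1}}]\A$ for every $k$; hence $\pp$ is an ideal, being a union of a chain of ideals. It is proper because the Witt coordinate $x_0$ of $1 = \sum_n[x_n]p^n$ equals $1$, which has valuation $0$, whereas every element of $\pp$ will be seen to have all Witt coordinates of positive valuation.

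The key step is the characterization: for $f = \sum_{n\geq 0}[x_n]p^n$ and $k \geq 0$, one has $f \in [\varpi^{1/p^k}]\A$ if and only if $v(x_n) \geq p^{-k}$ for every $n$ with $x_n \neq 0$. Indeed, if $f = [\varpi^{1/p^k}]g$ with $g = \sum_n[y_n]p^n$, then multiplicativity of $[-]$ together with uniqueness of Witt coordinates gives $x_n = \varpi^{1/p^k}y_n$, so $v(x_n) = p^{-k} + v(y_n) \geq p^{-k}$; conversely, if $v(x_n) \geq p^{-k}$ for all $n$, then $x_n\varpi^{-1/p^k} \in R$ because $R$ is a valuation ring, and $g := \sum_n [x_n\varpi^{-1/p^k}]p^n \in \A$ satisfies $[\varpi^{1/p^k}]g = f$. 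Taking the union over $k$ and using that $\lim_{t\to\infty}\cN(f)(t) = \inf_n v(x_n)$, we conclude (since $p^{-k} \to 0$)
\[
f \in \pp \iff \inf_n v(x_n) > 0 \iff \lim_{t\to\infty}\cN(f)(t) > 0.
\]

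It remains to see that $w(f) := \lim_{t\to\infty}\cN(f)(t) \in [0,+\infty]$ satisfies $w(fg) = w(f) + w(g)$ (for $f \neq 0$ it is finite and nonnegative, and $w(0) = +\infty$). Since $\cN(f)$ is decreasing with values bounded below by $0$, we have $w(f) = \inf_{t\in\R}\cN(f)(t) = \cL(\cN(f))(0)$, so \eqref{adding Legendre transforms} gives $w(fg) = \cL(\cN(fg))(0) = \cL(\cN(f))(0) + \cL(\cN(g))(0) = w(f) + w(g)$. Now suppose $fg \in \pp$; we may assume $f, g \neq 0$, the other case being immediate. Then $w(f) + w(g) = w(fg) > 0$ with $w(f), w(g) \geq 0$, so $w(f) > 0$ or $w(g) > 0$, i.e.\ $f \in \pp$ or $g \in \pp$, and $\pp$ is prime.

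I expect the only genuine subtlety to be the characterization of membership in $\pp$ in the second step: specifically, the use of the valuation-ring hypothesis on $R$ to divide Teichm\"uller coefficients by $[\varpi^{1/p^k}]$, and the passage from the chain of principal ideals to the clean condition $\inf_n v(x_n) > 0$. Once that identification is available, the additivity of $w$ is handed to us directly by the Legendre-transform formula \eqref{adding Legendre transforms}, and primeness follows formally.
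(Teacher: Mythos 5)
Your proof is correct and follows essentially the same route as the paper: characterize $\pp$ as the set of $f$ with $\lim_{t\to\infty}\cN(f)(t) = \inf_n v(x_n) > 0$, then deduce primeness from the multiplicative behavior of Newton polygons via the Legendre transform identity \eqref{adding Legendre transforms}. You simply spell out the membership characterization and the properness of $\pp$, which the paper leaves implicit.
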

\begin{proof}
Note that an element $f$ of $\A$ lies in $\pp$ if and only if $\lim_{t\rightarrow \infty} \cN(f)(t) >0$. If $g,g' \in \A\backslash \pp$, then $\lim_{t\rightarrow \infty} \cN(gg')(t)= \lim_{t\rightarrow \infty} (\cN(g)*\cN(g'))(t) = 0$ and so $gg' \notin \pp$. 
\end{proof}

\section{The strategy}\label{strategy section}
We define infinitely many sequences in $R$ as follows.  For all $i \geq 0$, define $a_{1, i} \coloneqq \varpi^{1/p^i} \in R$.  For $n > 1$ and $i \geq 0$, define $a_{n, i}$ recursively by
\[
a_{n,i} \coloneqq a_{n-1, i^2} \in R.
\]
Thus $a_{n,i}= \varpi^{1/p^{n_i}}$, where $n_i \coloneqq i^{2^{n-1}}$, and $v(a_{n,i})= p^{-n_i}$.
For each $n \geq 1$, define
\[
h_n \coloneqq \sum_{i = 0}^\infty [a_{n, i}]p^i \in \A.
\]
Note that $\cN(h_n) > 0$, for any $n$ we have $\lim_{t \to \infty} \cN(h_n)(t) = 0$, and $\cN(h_n)$ has a node at every integer.  

Finally, we define the following subsets of $\A$.  For $n \geq 1$, let
\[
\cS_n \coloneqq \{g \in \A \colon 0 < \cN(g) \leq \cN(h_n^m) \text{ for some } m \geq 1 \}.
\]
In particular, $h_n \in \cS_n$.

\begin{prop}\label{strategy prop}
The sets $\cS_n$ satisfy the following three properties:
\begin{enumerate}
\item\label{hyp1} for all $n \geq 1$ we have $\cS_{n + 1} \subset \cS_n$;
\item\label{hyp2} each $\cS_n$ is multiplicatively closed;
\item\label{hyp3} for any $g \in \cS_{n + 1}$ and $f \in \A$, we have that $g + fh_n \in \cS_{n+1}$.
\end{enumerate}
\end{prop}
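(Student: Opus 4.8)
My plan starts from a clean reformulation of the positivity clause: for $c\in\A$ one has $\cN(c)>0$ if and only if $c\in W(\m)$. Indeed, if some Teichmüller coordinate $c_j$ has $v(c_j)=0$ then $\cN(c)(j)\le v(c_j)=0$ while $\cN(c)(j)\ge\cL(\cN(c))(0)\ge 0$, so $\cN(c)(j)=0$; conversely, if all $c_j\in\m$, fix a finite $t$, note that only finitely many indices $j$ satisfy $j<t$, and choose $r>0$ small enough that $v(c_j)>r(t-j)$ for each such $j$ (and $r(t-j)\le 0<v(c_j)$ for $j\ge t$), so that $\cN(c)(t)\ge \cL(\cN(c))(r)-rt>0$. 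Since $W(\m)=\ker(W(R)\to W(\kappa))$ is an ideal of $\A$ and each $h_m$ lies in it, the positivity requirement in all three statements becomes automatic once the element in question is exhibited inside $W(\m)$.

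For \eqref{hyp1} and \eqref{hyp2} I would argue with the convolution calculus. Both $\cN(h_n)$ and $\cN(h_{n+1})$ have a node at every integer, hence are the linear interpolations of their integer values, and $v(a_{n+1,i})=p^{-i^{2^n}}\le p^{-i^{2^{n-1}}}=v(a_{n,i})$ gives $\cN(h_{n+1})\le\cN(h_n)$ pointwise. Applying $\cL$ and using \eqref{adding Legendre transforms}, this inequality is preserved by convolution powers, so $\cN(h_{n+1}^m)\le\cN(h_n^m)$ and hence $\cS_{n+1}\subseteq\cS_n$; strictness follows because $h_n\in\cS_n$ but $h_n\notin\cS_{n+1}$, since $\cN(h_n^m)(mj)=m\,p^{-j^{2^{n-1}}}$ and $\cN(h_{n+1}^m)(mj)=m\,p^{-j^{2^n}}$, so for $N$ large the node value $\cN(h_n)(N)=p^{-N^{2^{n-1}}}$ already exceeds $\cN(h_{n+1}^m)(N)\le m\,p^{-\lfloor N/m\rfloor^{2^n}}$ because $2^n>2^{n-1}$. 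For \eqref{hyp2}, if $g,g'\in\cS_n$ with $\cN(g)\le\cN(h_n^m)$, $\cN(g')\le\cN(h_n^{m'})$, then $gg'\in W(\m)$ (an ideal) and $\cN(gg')=\cN(g)*\cN(g')\le\cN(h_n^m)*\cN(h_n^{m'})=\cN(h_n^{m+m'})$ by monotonicity of convolution.

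The real content is \eqref{hyp3}. Write $g\in\cS_{n+1}$, so $g\in W(\m)$ and $\cN(g)\le\cN(h_{n+1}^m)$; the case $f=0$ is trivial, so take $f\ne 0$, and note $g+fh_n\in W(\m)$, which gives positivity. For $r>0$ set $v_r(x):=\cL(\cN(x))(r)=\inf_j(v(x_j)+jr)$; this is the Gauss valuation on $\A$ (cf. \cite{FarguesFontaine}), and $\cN(x)(t)=\sup_r\{v_r(x)-rt\}$. From $\cN(g)\le\cN(h_{n+1}^m)$ we get $v_r(g)\le m\,v_r(h_{n+1})$, while $v_r(fh_n)=v_r(f)+v_r(h_n)\ge v_r(h_n)$. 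The key estimate is that $v_r(h_n)/v_r(h_{n+1})\to+\infty$ as $r\to 0^{+}$: balancing $p^{-j^{q}}\asymp jr$ with $q=2^{n-1}$ gives $v_r(h_n)\asymp r(\log_p 1/r)^{1/q}$ and likewise $v_r(h_{n+1})\asymp r(\log_p 1/r)^{1/2q}$, whose ratio is $\asymp(\log_p 1/r)^{1/2q}\to\infty$ — this is precisely where the squaring recursion $a_{n,i}=a_{n-1,i^2}$ is used. Hence $v_r(g)<v_r(fh_n)$ for all $r\in(0,r_0)$ for some $r_0>0$, and the ultrametric inequality forces $v_r(g+fh_n)=v_r(g)$ on $(0,r_0)$. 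Since $v_r(g+fh_n)\le v((g+fh_n)_{n_0})+n_0 r$ (with $n_0$ the smallest index of a nonzero Teichmüller coordinate), the $r\ge r_0$ contribution to $\sup_r\{v_r(g+fh_n)-rt\}$ tends to $-\infty$ with $t$, so for $t$ large $\cN(g+fh_n)(t)=\cN(g)(t)\le\cN(h_{n+1}^m)(t)$. On the compact interval $[0,t_0]$, $\cN(g+fh_n)$ is bounded while $\cN(h_{n+1}^{m'})\to+\infty$ uniformly as $m'\to\infty$ (as in the proof of Lemma \ref{inequalities far out}); taking $m'$ large and $\ge m$ yields $\cN(g+fh_n)\le\cN(h_{n+1}^{m'})$ everywhere, i.e. $g+fh_n\in\cS_{n+1}$.

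I expect the main obstacle to be twofold: making the asymptotic comparison of $v_r(h_n)$ and $v_r(h_{n+1})$ as $r\to 0^{+}$ precise (this is the quantitative heart of the whole argument and the reason the sequences are built by iterating $i\mapsto i^2$), and controlling $\cN(g+fh_n)$ near $t=0$ — that is, ensuring the low Teichmüller coordinates of $g+fh_n$ do not behave worse than those of $g$. Everything else is a formal consequence of the Legendre/convolution formalism recalled in Section~\ref{strategy section} together with the identification $\{\cN>0\}=W(\m)$.
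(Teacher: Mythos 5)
Your proposal is essentially correct, but for the crucial property \eqref{hyp3} it takes a genuinely different route from the paper. The paper never isolates the Gauss valuations: it proves a structural fact about the Witt addition polynomials (Lemma \ref{witt vector addition lemma}), uses it to compare $\cN(g+fh_n)$ with $\cN(g)$ at integer nodes beyond the point where $\cN(fh_n)$ strictly dominates (Proposition \ref{Newt polys of sums}, fed by $\cN(fh_n)\ge\cN(h_n)$ from Proposition \ref{multiplication raises np} and the discrete inequality $\cN(h_{n+1}^m)(t)<\cN(h_n)(t)$ for $t>2m^2$ from Proposition \ref{check hyp1}), and then finishes with Lemma \ref{inequalities far out}. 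You work entirely on the Legendre side: $\cN(g)\le\cN(h_{n+1}^m)$ gives $v_r(g)\le m\,v_r(h_{n+1})$, your asymptotic $v_r(h_n)/v_r(h_{n+1})\to\infty$ as $r\to0^+$ plays the role of Proposition \ref{check hyp1}, and ultrametricity of each $v_r$ forces $v_r(g+fh_n)=v_r(g)$ for small $r$, which after inverting the transform controls $\cN(g+fh_n)$ by $\cN(g)$ for large $t$; the identification $\{\cN>0\}=W(\m)$ also handles positivity cleanly (the paper leaves positivity of $\cN(g+fh_n)$ implicit in Corollary \ref{check hyp3}). What your route buys is conceptual clarity about why the squaring recursion $a_{n,i}=a_{n-1,i^2}$ works and a uniform treatment of positivity; what it costs is that the additive input, the ultrametric inequality for $v_r$, is imported from \cite{FarguesFontaine} rather than proved (the paper's Lemma \ref{witt vector addition lemma} is precisely a self-contained proof of that input), and your estimate $v_r(h_n)\asymp r(\log_p 1/r)^{1/2^{n-1}}$ still needs to be made rigorous — it can be, by splitting the infimum at $j\approx(\tfrac12\log_p(1/r))^{2^{1-n}}$, whereas the paper sidesteps asymptotics with the clean integer-point comparison. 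Your arguments for \eqref{hyp1} (pointwise $\cN(h_{n+1})\le\cN(h_n)$ plus monotonicity of convolution, strictness by the same computation as Proposition \ref{check hyp1}) and \eqref{hyp2} (the identical Legendre argument) are correct and if anything more direct. Finally, the caveat you flag about controlling $\cN(g+fh_n)$ near $t=0$ (low Teichm\"uller coordinates cancelling, e.g. $g=h_{n+1}$, $f=-1$) is not a defect of your route: the paper's own application of Lemma \ref{inequalities far out} in Corollary \ref{check hyp3} tacitly assumes the same boundedness on $[0,t_0]$, so this is a shared issue with the convention on $+\infty$ values in the definition of $\cS_{n+1}$; note also that the $r\to0$ endpoint in your inversion formula is harmless because $\lim_{r\to0^+}v_r(g+fh_n)=\lim_{r\to0^+}v_r(g)=0$.
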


We prove this proposition in Section \ref{Sec:4}.   

\begin{thm} The ring $\A$ has infinite Krull dimension.
\end{thm}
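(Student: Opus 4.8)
The plan is to exhibit an infinite, strictly ascending chain of prime ideals of $\A$, built from the multiplicatively closed sets $\cS_n$ and the elements $h_n$ of Section~\ref{strategy section}; this is the (now axiomatized) argument of Arnold. Two preliminary remarks are needed. First, each $\cS_n$ is a nonempty, multiplicatively closed subset of $\A$ not containing $0$: it contains $h_n$; it is multiplicatively closed by property~\eqref{hyp2} of Proposition~\ref{strategy prop}; and $0 \notin \cS_n$ since $g \in \cS_n$ forces $\cN(g) \le \cN(h_n^m)$ for some $m \ge 1$, whereas $\cN(h_n^m) \not\equiv +\infty$ (because $h_n^m \ne 0$) while $\cN(0) \equiv +\infty$. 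Second, $\pp \cap \cS_n = \emptyset$ for every $n$: if $f \in \pp$ then $\lim_{t\to\infty}\cN(f)(t) > 0$ as in the proof of Proposition~\ref{p is prime}, but if moreover $f \in \cS_n$ then $\cN(f) \le \cN(h_n^m)$, so $\lim_{t\to\infty}\cN(f)(t) \le \lim_{t\to\infty}\cN(h_n^m)(t) = 0$ (this limit is additive in products by \eqref{adding Legendre transforms}, and it vanishes for $h_n$), a contradiction.

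I then construct primes $\pp_n$ recursively. By Zorn's lemma choose an ideal $\pp_1$ maximal among those containing $\pp$ and disjoint from $\cS_1$; this ideal is automatically prime by the standard argument (an ideal maximal among those avoiding a multiplicatively closed set is prime), which uses precisely property~\eqref{hyp2}. Suppose now that a prime $\pp_n$ disjoint from $\cS_n$ has been built. By property~\eqref{hyp1} we have $\cS_{n+1}\subseteq\cS_n$, so $\pp_n$ is also disjoint from $\cS_{n+1}$. The crucial point is that the larger ideal $\pp_n + h_n\A$ remains disjoint from $\cS_{n+1}$: if $g = q + fh_n$ with $q \in \pp_n$, $f \in \A$ were to lie in $\cS_{n+1}$, then property~\eqref{hyp3} applied to $g \in \cS_{n+1}$ and $-f \in \A$ would give $q = g + (-f)h_n \in \cS_{n+1}$, contradicting $\pp_n\cap\cS_{n+1}=\emptyset$. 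Hence, again by Zorn's lemma, there is an ideal $\pp_{n+1}$ maximal among those containing $\pp_n + h_n\A$ and disjoint from $\cS_{n+1}$, and $\pp_{n+1}$ is prime as before.

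It remains to check that the chain is strictly increasing. Since $h_n \in \cS_n$ and $\pp_n\cap\cS_n=\emptyset$, we have $h_n \notin \pp_n$; but $h_n \in \pp_{n+1}$ by construction, so $\pp_n \subsetneq \pp_{n+1}$. Therefore
\[
\pp \subseteq \pp_1 \subsetneq \pp_2 \subsetneq \pp_3 \subsetneq \cdots
\]
is an infinite strictly ascending chain of prime ideals of $\A$, and $\A$ has infinite Krull dimension. The main difficulty does not lie in this final deduction but in Proposition~\ref{strategy prop}, which is proved in Section~\ref{Sec:4}; granting it, the only thing to appreciate is that property~\eqref{hyp3} is exactly what is needed to enlarge $\pp_n$ by $h_n$ without meeting $\cS_{n+1}$, with property~\eqref{hyp2} responsible for primality of the maximal disjoint ideals and property~\eqref{hyp1} for keeping the recursion going.
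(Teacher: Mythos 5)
Your proposal is correct and follows essentially the same route as the paper: it is Arnold's argument, using the disjointness of $\pp$ from the $\cS_n$, properties \eqref{hyp1}--\eqref{hyp3} of Proposition \ref{strategy prop}, and the standard Zorn's lemma fact that an ideal maximal among those avoiding a multiplicatively closed set is prime (which the paper invokes implicitly). The only cosmetic differences are that you enlarge $\pp$ to a maximal disjoint prime $\pp_1$ rather than taking $\pp_1 = \pp$, and you apply property \eqref{hyp3} with $-f$ to pull $q$ back into $\cS_{n+1}$ instead of pushing $g$ forward into $\cS_n$; both are equivalent to the paper's steps.
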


\begin{proof}
We follow Arnold's proof of \cite[Theorem 1]{Arnold}.  We prove that for any $n \geq 1$, there exists a chain of prime ideals of $\A$, say $\pp_1 \subset \cdots \subset \pp_n$, such that $\pp_n \cap \cS_n = \emptyset$.

For $n = 1$, let $\pp_1 = \pp$.  To see that $\pp \cap \cS_1 = \emptyset$, note that if $f \in \pp$, then $f \in [\varpi^{1/p^k}]\A$ for some $k \geq 0$, and so $\cN(f) \geq 1/p^{k}$.  On the other hand, if $f \in \cS_1$, then for some $m \geq 1$ we have that $\lim_{t \to \infty} \cN(f)(t) \leq \lim_{t \to \infty} \cN(h_1^m)(t) = 0$.

Fix $n \geq 1$ and suppose for induction that there is a chain $\pp_1 \subset \cdots \subset \pp_n$ of prime ideals of $\A$ such that $\pp_n \cap \cS_n = \emptyset$.  Consider the ideal $\a_n \coloneqq \pp_n + h_n\A$.  Note that $\a_n \neq \pp_n$ since $h_n \in \cS_n$ and $\pp_n \cap \cS_n = \emptyset$.  We claim that $\a_n \cap \cS_{n+1} = \emptyset$.  Indeed, given $g \in \cS_{n+1}$, we have that $g + h_nf \in \cS_{n+1}$ for all $f \in \A$ by property \eqref{hyp3} of the sets $\cS_n$.  By property \eqref{hyp1}, it follows that $g + h_nf \in \cS_n$ for all $f \in \A$.  If $g \in \a_n$, then there is some $f \in \A$ such that $g + h_nf \in \pp_n$. But $\pp_n \cap \cS_n = \emptyset$, so it follows that $g \not\in \a_n$.

Since $\cS_{n+1}$ is multiplicatively closed by property \eqref{hyp2}, there is a prime ideal $\pp_{n+1}$ of~$\A$ such that $\pp_n \subset \a_n \subseteq \pp_{n+1}$ and $\pp_{n+1} \cap \cS_{n+1} = \emptyset$.  By induction on $n$, it follows that $\A$ has infinite Krull dimension.  
\end{proof}

\begin{rem}
\begin{enumerate}[a)]
\item[]
\item Arnold has used an argument as above to show that the ring $R \lb X \rb$ has infinite Krull dimension \cite[Theorem 1]{Arnold}.  In fact given any ring $A$, if one can exhibit elements $h_n$ of $A$ and sets $\cS_n$ satisfying the properties in Proposition \ref{strategy prop} together with a prime ideal $\pp$ such that $\pp \cap \cS_1 = \emptyset$, then the above argument shows that $A$ has infinite Krull dimension.
\item There is a rigorous way to view the power series ring $R \lb X \rb$ as an equal characteristic version of $\A$ (see \cite[Section 1.3]{FarguesFontaine}).  Our definitions make sense in this more general setting, and our arguments give another proof that $R\lb X \rb$ has infinite Krull dimension.
\end{enumerate}
\end{rem}

\section{The proof of Proposition \ref{strategy prop}}\label{Sec:4}
In this section we prove Proposition \ref{strategy prop}.  Recall that $v$ is the valuation on $R$ and $s_i(h_n) \coloneqq v(a_{n, i-1}/a_{n,i})$ is the $i$-th slope of $\cN(h_n)$.

\begin{prop}\label{check hyp1}
Fix $n, m \geq 1$.  For $t > 2m^2$ we have that
\[\cN(h_{n+1}^m)(t) < \cN(h_n)(t).
\]
\end{prop}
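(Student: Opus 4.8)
The plan is to reduce the inequality to comparisons at integer arguments, where the explicit valuations $v(a_{n,i}) = p^{-i^{2^{n-1}}}$ make everything transparent, and then pass back to all real $t$ by convexity.

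First I would pin down the shape of $\cN(h_{n+1}^m)$. Because $\cN(h_{n+1})$ has a node at every integer, $\cN(h_{n+1})(i) = v(a_{n+1,i}) = p^{-i^{2^n}}$ for all $i \ge 0$, and its consecutive slopes $\cN(h_{n+1})(i) - \cN(h_{n+1})(i+1)$ are strictly positive and nonincreasing. By the interlacing description of the Newton polygon of a product (applied $m-1$ times), the slope sequence of $\cN(h_{n+1}^m)$ is the decreasing rearrangement of $m$ copies of the slope sequence of $\cN(h_{n+1})$; as the latter is already nonincreasing, the slope of $\cN(h_{n+1}^m)$ on $[l,l+1]$ is the $\lfloor l/m\rfloor$-th consecutive slope of $\cN(h_{n+1})$. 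Since $\cN(h_{n+1}^m)(0) = v(\varpi^m) = m = m\,\cN(h_{n+1})(0)$, summing the first $qm$ of these slopes and telescoping gives
\[
\cN(h_{n+1}^m)(qm) = m\,\cN(h_{n+1})(q) = m\,p^{-q^{2^n}} \qquad (q \ge 0);
\]
hence for any integer $k$, writing $q := \lfloor k/m\rfloor$, we get $\cN(h_{n+1}^m)(k) \le \cN(h_{n+1}^m)(qm) = m\,p^{-q^{2^n}}$, since $qm \le k$ and $\cN(h_{n+1}^m)$ is decreasing.

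Next I would reduce to integer $t$. Both $\cN(h_n)$ and $\cN(h_{n+1}^m)$ are finite on $[0,\infty)$, the first is affine on each $[k,k+1]$ with $k \in \Z$ (it has a node at every integer), and the second is convex. Hence it suffices to prove $\cN(h_{n+1}^m)(k) < \cN(h_n)(k)$ for all integers $k \ge 2m^2$: on any such $[k,k+1]$ the convex function $\cN(h_{n+1}^m)$ lies below the chord through its values at $k$ and $k+1$, an affine function that is strictly below the affine function $\cN(h_n)|_{[k,k+1]}$ at both endpoints and hence on all of $[k,k+1]$, and $\lfloor t\rfloor \ge 2m^2$ for every real $t > 2m^2$. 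Using $\cN(h_n)(k) = v(a_{n,k}) = p^{-k^{2^{n-1}}}$ together with the bound above, the claim for integer $k \ge 2m^2$ is equivalent to
\[
\lfloor k/m\rfloor^{2^n} - k^{2^{n-1}} > \log_p m.
\]

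Finally, this last inequality is elementary. Put $e := 2^{n-1} \ge 1$ and $q := \lfloor k/m\rfloor$; then $k \ge 2m^2$ forces $q \ge 2m$, so $q \ge 2$ and $m \le q/2$, whence $k < (q+1)m \le \tfrac12(q^2+q) \le \tfrac34 q^2$ and thus $k^e \le \tfrac34 q^{2e}$, giving
\[
\lfloor k/m\rfloor^{2^n} - k^{2^{n-1}} = q^{2e} - k^e \ge \tfrac14 q^{2e} \ge \tfrac14(2m)^{2e} = 4^{e-1} m^{2e} \ge m^2 > \log_p m.
\]
The one step that needs a little care is the passage from integer arguments back to arbitrary real $t$ in the second paragraph, which rests on the convexity of $\cN(h_{n+1}^m)$ and the piecewise-linearity of $\cN(h_n)$; everything else is bookkeeping with slope sequences and a deliberately crude estimate — the constant $2m^2$ is far from sharp — so I anticipate no real obstacle.
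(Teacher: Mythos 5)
Your proof is correct and follows essentially the same route as the paper: bound $\cN(h_{n+1}^m)$ at an integer $\ell$ by $m\,\cN(h_{n+1})(\lfloor \ell/m\rfloor)$ using the interlaced slope structure, then reduce to an elementary comparison of the exponents $\lfloor \ell/m\rfloor^{2^n}$ and $\ell^{2^{n-1}}$. Your explicit convexity/piecewise-linearity step passing from integer arguments to all real $t>2m^2$ is a welcome extra bit of care that the paper's proof (which only treats $\ell = km+r$ with $k>2m$) leaves implicit.
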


\begin{proof}
Let $\ell = km + r \in \Z$ with $k > 2m$ and $0 \leq r < m$.  We have
\[
\cN(h_n)(\ell) = v(a_{n, \ell}) = v(a_{n, km+r})
\]
and 
\[
\cN(h_{n+1}^m)(\ell) = mv(a_{n+1, k}) - s_{k+1}(h_{n+1})r < mv(a_{n+1, k}) = mv(a_{n, k^2}).
\]
To see that $mv(a_{k^2}) < v(a_{n, km+r})$, recall that $v(a_{n, i}) = p^{-i^{2^{n-1}}}$.  Thus we must show that
\[
m < p^{k^{2^n} - (km+r)^{2^{n-1}}}.
\]
Since $r < m$, it suffices to show that $m < p^{k^{2^n} - ((k+1)m)^{2^{n-1}}}$.  One checks this quickly using that $k > 2m$ and therefore $k^2 - (km +m)>m$.
\end{proof}

\begin{corollary}\label{check hyp1 corollary}
For all $n \geq 1$ we have $\cS_{n+1} \subset \cS_n$.
\end{corollary}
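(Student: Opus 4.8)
The plan is to read off the corollary directly from Proposition~\ref{check hyp1}, using Lemma~\ref{inequalities far out} to bridge the gap between "eventual" and "global" domination of Newton polygons. First I would take an arbitrary $g \in \cS_{n+1}$, so that $0 < \cN(g) \leq \cN(h_{n+1}^m)$ for some $m \geq 1$. Combining this upper bound with the estimate $\cN(h_{n+1}^m)(t) < \cN(h_n)(t)$ for $t > 2m^2$ supplied by Proposition~\ref{check hyp1}, I obtain $\cN(g)(t) \leq \cN(h_n)(t)$ for all $t \geq t_0$, where $t_0$ is any integer exceeding $2m^2$. Since $\cN(h_n) > 0$, Lemma~\ref{inequalities far out} applies with $f = h_n$ and yields $\cN(g) \leq \cN(h_n^{m'})$ for all $m'$ sufficiently large. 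As $\cN(g) > 0$ by assumption, this exhibits $g$ as an element of $\cS_n$, proving $\cS_{n+1} \subseteq \cS_n$.

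To upgrade this to the strict inclusion required by the paper's notational convention, I would exhibit an element of $\cS_n$ not lying in $\cS_{n+1}$, the natural candidate being $h_n$. We already know $h_n \in \cS_n$ (and $\cN(h_n) > 0$). On the other hand, for every $m \geq 1$ Proposition~\ref{check hyp1} provides some $t > 2m^2$ with $\cN(h_{n+1}^m)(t) < \cN(h_n)(t)$, so the inequality $\cN(h_n) \leq \cN(h_{n+1}^m)$ fails for every $m \geq 1$; hence $h_n \notin \cS_{n+1}$. Together with the previous paragraph this gives $\cS_{n+1} \subsetneq \cS_n$.

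I do not expect a real obstacle here: all the substance sits in Proposition~\ref{check hyp1} and Lemma~\ref{inequalities far out}, and the corollary is essentially bookkeeping. The only points meriting a moment's care are (i) carrying the hypothesis $\cN(g) > 0$ through the argument so that $g$ genuinely lands in $\cS_n$ rather than merely satisfying the upper bound, and (ii) observing that the threshold "$t > 2m^2$" in Proposition~\ref{check hyp1} depends on $m$ but not on $g$, which is exactly the uniformity needed to feed a fixed $t_0$ into Lemma~\ref{inequalities far out}.
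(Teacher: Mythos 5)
Your argument is correct and is essentially the paper's own proof: both deduce eventual domination from Proposition~\ref{check hyp1}, feed it into Lemma~\ref{inequalities far out} with $f = h_n$, and get strictness from $h_n \in \cS_n \setminus \cS_{n+1}$. The only (immaterial) difference is that you apply the lemma to $g$ directly, while the paper applies it to $h_{n+1}^m$ and then uses $\cN(g) \leq \cN(h_{n+1}^m)$.
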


\begin{proof}
If $g \in \cS_{n+1}$ then for some $m \geq 1$ we have $0 < \cN(g) \leq \cN(h_{n+1}^m)$.  By Proposition \ref{check hyp1} and Lemma \ref{inequalities far out} it follows that for $m'$ sufficiently large (depending on $m$ and $n$) we have $\cN(h_{n+1}^m) < \cN(h_n^{m'})$, so $g \in \cS_n$.  To see that the inclusion is strict, note that Proposition \ref{check hyp1} also implies that $h_n \not\in \cS_{n+1}$, but $h_n \in \cS_n$.
\end{proof}

\begin{prop}\label{check hyp2}
For each $n \geq 1$, the set $\cS_n$ is multiplicatively closed.
\end{prop}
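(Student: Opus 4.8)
The plan is to take $g, g' \in \cS_n$ and verify that the product again satisfies the two conditions defining membership in $\cS_n$, namely $\cN(gg') > 0$ and $\cN(gg') \le \cN(h_n^M)$ for some $M \ge 1$. By hypothesis we may fix $m, m' \ge 1$ with $0 < \cN(g) \le \cN(h_n^m)$ and $0 < \cN(g') \le \cN(h_n^{m'})$; the whole argument then consists of transporting these two inequalities through the Legendre transform formalism of Section~2.

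The upper bound is purely formal. Applying the order-preserving operator $\cL$ to $\cN(g) \le \cN(h_n^m)$ and to $\cN(g') \le \cN(h_n^{m'})$, adding, and using \eqref{adding Legendre transforms} (which says that $\cL(\cN(-))$ carries products to sums), one obtains
\[
\cL(\cN(gg')) = \cL(\cN(g)) + \cL(\cN(g')) \le \cL(\cN(h_n^m)) + \cL(\cN(h_n^{m'})) = \cL(\cN(h_n^{m + m'})).
\]
Since $\cN(gg')$ and $\cN(h_n^{m + m'})$ are convex, this inequality of Legendre transforms is equivalent to $\cN(gg') \le \cN(h_n^{m + m'})$, so one takes $M = m + m'$.

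Verifying $\cN(gg') > 0$ is the step I expect to require the only real care: $\cN(g) > 0$ is a pointwise condition and does not force $\lim_{t \to \infty}\cN(g)(t) > 0$ — already $\cN(h_n)$ has limit $0$ — so one cannot conclude simply by adding limiting values. Instead I would prove $\cN(gg') \ge \cN(g)$ directly. In Legendre coordinates, using \eqref{adding Legendre transforms} and convexity of the two polygons, this amounts to $\cL(\cN(g)) + \cL(\cN(g')) \ge \cL(\cN(g))$: for $\lambda < 0$ the right-hand side is already $-\infty$, since $\cN(g)$ is bounded on $[n_g, \infty)$ and hence $\cN(g)(t) + \lambda t \to -\infty$; and for $\lambda \ge 0$ one has $\cL(\cN(g'))(\lambda) = \inf_t\{\cN(g')(t) + \lambda t\} \ge 0$, because $\cN(g') \ge 0$ (the valuation $v$ on $R$ is nonnegative) and $\cN(g')(t) = +\infty$ for $t < 0$. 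Thus $\cN(gg') \ge \cN(g) > 0$, and together with the upper bound this shows $gg' \in \cS_n$, so $\cS_n$ is multiplicatively closed. Everything here beyond the positivity check is routine bookkeeping with Legendre transforms; the one place to slip is forgetting that "$0 < \cN(g)$" is weaker than "$\lim_{t\to\infty}\cN(g)(t) > 0$" and trying to argue positivity of $\cN(gg')$ asymptotically.
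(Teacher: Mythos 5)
Your proof is correct and follows essentially the same route as the paper: the upper bound $\cN(gg') \le \cN(h_n^{m+m'})$ is exactly the paper's argument via additivity of Legendre transforms and the recovery formula for convex functions (the paper takes a common exponent $m$ and ends with $h_n^{2m}$, an immaterial difference). For positivity the paper merely remarks that $\cN(fg)=\cN(f)*\cN(g)>0$ is clear, while your inequality $\cN(gg')\ge \cN(g)$ is the paper's Proposition~\ref{multiplication raises np} (proved there by slope interlacing, here by a correct Legendre-transform computation), so this step is a carefully spelled-out variant rather than a genuinely different approach.
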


\begin{proof}
Let $f, g \in \cS_n$.  Then it is clear that $\cN(fg)= \cN(f)*\cN(g)>0$. 

For $m$ sufficiently large, we have $0 < \cN(f), \cN(g) \leq \cN(h_n^m)$.  Thus for any $\lambda, t \in \R$ we have 
\[
\cN(f)(t) + \lambda t \leq \cN(h_n^m)(t) + \lambda t.
\]
Taking the infimum over $t \in \R$, it follows that $\cL(\cN(f))(\lambda) \leq \cL(\cN(h_n^m))(\lambda)$ for all $\lambda \in \R$.  Similarly, $\cL(\cN(g)) \leq \cL(\cN(h_n^m))$.  Therefore
\[
\cL(\cN(fg)) = \cL(\cN(f)) + \cL(\cN(g)) \leq 2\cL(\cN(h_n^m)) = \cL(\cN(h_n^{2m})).
\]
Hence we have that $\cL(\cN(fg))(\lambda) - t\lambda \leq \cL(\cN(h_n^{2m}))(\lambda) - t\lambda$ for all $t, \lambda \in \R$.  It follows that
\[
\cN(fg)(t) = \sup_\lambda\{\cL(\cN(fg))(\lambda) - t\lambda\} \leq \sup_\lambda\{\cL(\cN(h_n^{2m}))(\lambda) - t\lambda\} = \cN(h_n^{2m})(t)
\]
for all $t \in \R$.  Therefore $fg \in \cS_n$.  
\end{proof}

\begin{prop}\label{multiplication raises np}
Let $h$ be an element of $\A$ such that $\cN(h) >0$. Then for any $f \in \A$, $\cN(fh) \geq \cN(h)$. 
\end{prop}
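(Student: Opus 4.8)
The plan is to translate everything into Legendre transforms, where the convolution product computing $\cN(fh)$ becomes ordinary addition. By \eqref{adding Legendre transforms} we have $\cL(\cN(fh)) = \cL(\cN(f)) + \cL(\cN(h))$, and since Newton polygons are convex, the equivalence noted just before \eqref{adding Legendre transforms} lets me replace the target inequality $\cN(h) \leq \cN(fh)$ by the pointwise inequality
\[
\cL(\cN(f))(\lambda) + \cL(\cN(h))(\lambda) \;\geq\; \cL(\cN(h))(\lambda) \qquad \text{for all } \lambda \in \R .
\]
I may assume $f \neq 0$ and $h \neq 0$, since otherwise $fh = 0$ and the statement is immediate.

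The one substantive observation I will need is that $\cL(\cN(g))(\lambda) \geq 0$ for every nonzero $g \in \A$ and every $\lambda \geq 0$. To see this, write $g = \sum_{n \geq 0}[x_n]p^n$; then $\cN(g)$ is decreasing with $\lim_{t \to \infty}\cN(g)(t) = \inf_n v(x_n) \geq 0$, so $\cN(g) \geq 0$ everywhere, and $\cN(g)(t) = +\infty$ for $t < 0$ because $n_g \geq 0$. Hence for $\lambda \geq 0$ the function $t \mapsto \cN(g)(t) + \lambda t$ is everywhere nonnegative (for $t \geq 0$ both summands are $\geq 0$; for $t < 0$ the first summand is $+\infty$), so its infimum $\cL(\cN(g))(\lambda)$ is nonnegative; it is also finite, since evaluating at $t = n_g$ gives a finite value.

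Granting this, I finish by splitting on the sign of $\lambda$. If $\lambda \geq 0$, then $\cL(\cN(f))(\lambda) \geq 0$ and $\cL(\cN(h))(\lambda)$ is a nonnegative real number, so adding the former to the latter does not decrease it. If $\lambda < 0$, then $\cN(h)(t) + \lambda t \to -\infty$ as $t \to \infty$ (using that $\lim_{t\to\infty}\cN(h)(t)$ is finite, which holds as $h \neq 0$), so $\cL(\cN(h))(\lambda) = -\infty$; and since $\cL(\cN(f))(\lambda) < +\infty$, the left-hand side equals $-\infty$ as well, so the inequality holds. I do not anticipate a real obstacle here: essentially all the care goes into the bookkeeping with the value $+\infty$ in the definition of $\cN(\cdot)$ and with the values $\pm\infty$ taken by Legendre transforms. (Note that this argument never uses the hypothesis $\cN(h) > 0$; alternatively, one can argue directly from the interlacing-of-slopes description of $\cN(fh)$, using that $\cN(f)$ is a nonnegative polygon and the slopes $s_j(h)$ are nonnegative.)
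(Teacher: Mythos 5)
Your proof is correct, but it runs along a different track than the paper's. You work entirely on the Legendre-transform side: using $\cL(\cN(fh)) = \cL(\cN(f)) + \cL(\cN(h))$ from \eqref{adding Legendre transforms}, you reduce to the pointwise statement $\cL(\cN(f))(\lambda) \geq 0$ for $\lambda \geq 0$ (which you correctly derive from $\cN(f) \geq 0$ and $n_f \geq 0$), dispose of $\lambda < 0$ by noting both sides are $-\infty$, and then transfer back via the equivalence stated just before \eqref{adding Legendre transforms}. The paper instead argues directly on the polygon side with the interlacing description: every slope $s_i(h)$ occurs among the slopes of $\cN(fh)$, so for $l \geq n_f + n_h$ one gets $\cN(fh)(l) \geq \sum_{i \geq l} s_i(h) = \cN(h)(l)$ (the final equality using that $\cN(h_n)$ tends to $0$ at infinity in the intended application). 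Your route buys a cleaner bookkeeping-free argument and makes transparent, as you observe, that the hypothesis $\cN(h) > 0$ is superfluous — only $h \neq 0$ (and the trivial $f = 0$ case) matters — whereas the paper's route is more concrete and stays within the slope language used in the surrounding propositions; your closing parenthetical about arguing via interlacing of slopes is essentially the paper's proof. The only points deserving a word of care in your write-up are the degenerate cases: $\cN(0)$ is not defined in the paper (so "immediate" for $f=0$ tacitly adopts the convention $\cN(0) \equiv +\infty$), and the conversion from $\cL(\cN(h)) \leq \cL(\cN(fh))$ back to $\cN(h) \leq \cN(fh)$ uses convexity of both polygons, which holds since $fh \neq 0$ when $f, h \neq 0$ — a fact that itself follows from the Newton polygon of a product having a finite value at $n_f + n_h$.
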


\begin{proof}
The Newton polygon $\cN(fh)$ starts at $n_f+n_h$. Note that the slopes of $\cN(fh)$ are all positive and form a monotone sequence converging to zero.  Therefore all slopes $s_i(h)$ of $h$ eventually occur as slopes of $\cN(hf)$. It follows that for any $l\geq n_f+n_h$, $\cN(fh)(l)\geq \sum_{i\geq l}^{\infty} s_i(h)= \cN(h)(l)$. 
\end{proof}

Let $f, g \in \A$, and write $f = \sum_{n = 0}^\infty [x_n]p^n$ and $g = \sum_{n = 0}^\infty[y_n]p^n$.  In order to prove property (3) from Proposition \ref{strategy prop} we need to understand the Newton polygon of $f+g$ in terms of those of $f$ and $g$. For that we show a property of Witt vector addition in Lemma \ref{witt vector addition lemma} below. First, recall the translation between Teichm\"uller expansions and Witt coordinates:
\[
\sum_{n = 0}^\infty [x_n]p^n = (x_0, x_1^p, x_2^{p^2}, \ldots, x_n^{p^n}, \ldots).
\]
Recall also that addition of Witt vectors is governed by the polynomials 
\[
S_n(X_0, \ldots, X_n; Y_0, \ldots, Y_n), 
\]
which are defined recursively by
\[
S_0(X_0; Y_0) \coloneqq X_0 + Y_0
\]
and
\[
\sum_{k = 0}^n p^kS_k(X_0, \ldots, X_k; Y_0, \ldots, Y_k)^{p^{n-k}} = \sum_{k = 0}^n p^k(X_k^{p^{n - k}} + Y_k^{p^{n-k}}).
\]
Thus
\begin{align*}
f + g &= (S_0(x_0; y_0), \ldots, S_n(x_0, \ldots, x_n^{p^n}; y_0, \ldots, y_n^{p^n}), \ldots)\\
&= \sum_{n = 0}^\infty [S_n(x_0, \ldots, x_n^{p^n}; y_0, \ldots, y_n^{p^n})^{p^{-n}}]p^n.
\end{align*}

\begin{lem}\label{witt vector addition lemma}
For all $n \geq 0$ we have that 
\[
S_n(x_0, \ldots, x_n^{p^n}; y_0, \ldots, y_n^{p^n}) = x_n^{p^n} + y_n^{p^n} + \Sigma_n,
\]
where $\Sigma_n$ is a sum of terms of the form $\prod_{k = 0}^{n-1} x_k^{p^ki_k}y_k^{p^kj_k}$ such that $\sum_{k = 0}^{n - 1} p^k(i_k + j_k) = p^n$.
\end{lem}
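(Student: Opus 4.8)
The plan is to recast the statement as a weighted-homogeneity property of the Witt addition polynomials $S_n$ and then prove it by induction on $n$ from their defining recursion. Assign to each indeterminate $X_k$ and $Y_k$ the weight $p^k$, so that a monomial $\prod_k X_k^{i_k} Y_k^{j_k}$ has weight $\sum_k p^k(i_k + j_k)$; call a polynomial \emph{isobaric of weight $d$} if every monomial occurring in it has weight $d$. The key claim is that for every $n \geq 0$ the polynomial $S_n(X_0, \ldots, X_n; Y_0, \ldots, Y_n)$ is isobaric of weight $p^n$, and that the only monomials occurring in it that involve $X_n$ or $Y_n$ are $X_n$ and $Y_n$ themselves, each with coefficient $1$. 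Granting this, write $S_n = X_n + Y_n + T_n$ with $T_n \in \Z[X_0, \ldots, X_{n-1}; Y_0, \ldots, Y_{n-1}]$ isobaric of weight $p^n$; substituting $X_k \mapsto x_k^{p^k}$ and $Y_k \mapsto y_k^{p^k}$ sends $X_n, Y_n$ to $x_n^{p^n}, y_n^{p^n}$ and sends a weight-$p^n$ monomial $\prod_{k=0}^{n-1} X_k^{i_k} Y_k^{j_k}$ (so that $\sum_{k=0}^{n-1} p^k(i_k+j_k) = p^n$) to $\prod_{k=0}^{n-1} x_k^{p^k i_k} y_k^{p^k j_k}$, which is exactly the shape of the terms permitted in $\Sigma_n$.

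To prove the key claim I would induct on $n$. The case $n = 0$ is immediate from $S_0 = X_0 + Y_0$. For $n \geq 1$, recall the standard fact (provable by the same induction) that $S_k$ involves only the variables $X_0, \ldots, X_k$ and $Y_0, \ldots, Y_k$, so that none of $S_0, \ldots, S_{n-1}$ involves $X_n$ or $Y_n$. Isolating the top term of the recursion gives
\[
p^n S_n = p^n(X_n + Y_n) + \sum_{k=0}^{n-1} p^k\bigl(X_k^{p^{n-k}} + Y_k^{p^{n-k}}\bigr) - \sum_{k=0}^{n-1} p^k S_k^{p^{n-k}}.
\]
Every $X_k^{p^{n-k}}$ and $Y_k^{p^{n-k}}$ is isobaric of weight $p^k \cdot p^{n-k} = p^n$, and by the inductive hypothesis each $S_k^{p^{n-k}}$ is isobaric of weight $p^k \cdot p^{n-k} = p^n$; hence the right-hand side is isobaric of weight $p^n$, and therefore so is $S_n$, since it is obtained by dividing by the nonzero scalar $p^n$, which does not change which monomials occur. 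Finally, $X_n$ and $Y_n$ enter the right-hand side only through the summand $p^n(X_n+Y_n)$, so they occur in $S_n$ only as $X_n$ and $Y_n$ with coefficient $1$; this closes the induction.

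I do not expect a real obstacle here — the argument is essentially bookkeeping with the recursion — but two points deserve care. First, the induction hypothesis must be carried in the strong form recording both the variable range of $S_k$ and its isobaric weight, so that the correction sums $\sum_{k<n} p^k S_k^{p^{n-k}}$ can be controlled. Second, one should note that the division by $p^n$ producing $S_n$ is legitimate; this is the classical integrality of the Witt addition polynomials ($S_n \in \Z[X_0, \ldots, X_n; Y_0, \ldots, Y_n]$), though the weighted-homogeneity argument itself only uses that $S_n$ is a well-defined polynomial with rational coefficients, which is clear from the recursion.
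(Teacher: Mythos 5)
Your argument is correct and is essentially the paper's proof in a more carefully formalized guise: the paper also inducts on the defining recursion, tracking the weighted degree $\sum_k p^k(i_k+j_k)$ of the monomials (its two-line proof notes that $S_n^p$ consists of weight-$p^{n+1}$ terms and invokes the recursion), whereas you phrase the same bookkeeping as an isobaric-weight statement about the abstract polynomials $S_n$ before substituting $X_k \mapsto x_k^{p^k}$, $Y_k \mapsto y_k^{p^k}$. No gap; your extra care about the division by $p^n$ and about $X_n, Y_n$ appearing only linearly just makes explicit what the paper leaves implicit.
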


\begin{proof}
Note that if the lemma holds for some $n$, then $S_n^p$ is a sum of terms of the form $\prod_{k = 0}^n x_k^{p^ki_k}y_k^{p^kj_k}$ such that $\sum_{k = 0}^n p^k(i_k + j_k) = p^{n+1}$.  The lemma then follows from the definition of $S_n$ and induction on $n$.
\end{proof}

\begin{prop}\label{Newt polys of sums}
Let $f= \sum_{n = 0}^\infty [x_n]p^n, g = \sum_{n = 0}^\infty[y_n]p^n \in \A$.  Assume that $\cN(g)$ is strictly decreasing.  Suppose there exists a $t_0 \geq 0$ such that for all $t \geq t_0$ we have $\cN(g)(t) < \cN(f)(t)$.  If $n$ is a node of $\cN(g)$ such that $n \geq t_0$, then $\cN(g + f)(n) \leq \cN(g)(n)$.  
\end{prop}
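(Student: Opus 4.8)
The plan is to reduce the statement to an inequality between the Teichm\"uller coordinates of $g + f$. Write $g + f = \sum_{k \ge 0}[z_k]p^k$. Since, by definition, $\cN(g+f)$ lies below the point $(n, v(z_n))$, it suffices to prove that $v(z_n) \le v(y_n)$; here $v(y_n) = \cN(g)(n)$ because $n$ is a node of $\cN(g)$, and if $y_n = 0$ the inequality is automatic, so we may assume $v(y_n) < \infty$. By the formula for Witt vector addition recalled above together with Lemma \ref{witt vector addition lemma},
\[
z_n^{p^n} = S_n(x_0, \ldots, x_n^{p^n}; y_0, \ldots, y_n^{p^n}) = x_n^{p^n} + y_n^{p^n} + \Sigma_n,
\]
where $\Sigma_n$ is a finite $\mathbb{F}_p$-linear combination of monomials $\prod_{k=0}^{n-1} x_k^{p^k i_k} y_k^{p^k j_k}$ with $\sum_{k=0}^{n-1} p^k(i_k + j_k) = p^n$. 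I will show that the summand $y_n^{p^n}$, which has valuation $p^n v(y_n)$, has strictly smaller valuation than each of the other two summands; the ultrametric inequality then forces $v(z_n^{p^n}) = p^n v(y_n)$, hence $v(z_n) = v(y_n)$, as desired.

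For the term $x_n^{p^n}$: since $n \ge t_0$ and $\cN(f)$ is decreasing, $v(x_n) \ge \cN(f)(n) > \cN(g)(n) = v(y_n)$, so $v(x_n^{p^n}) = p^n v(x_n) > p^n v(y_n)$ (with the convention $v(0) = +\infty$). For $\Sigma_n$: since a nonzero element of the prime subfield $\mathbb{F}_p \subseteq R$ is a unit and $\Sigma_n$ is a finite sum, $v(\Sigma_n)$ is at least the minimum over the monomials above of $\sum_{k=0}^{n-1} p^k\bigl(i_k v(x_k) + j_k v(y_k)\bigr)$, so it suffices to show that each such monomial has valuation $> p^n v(y_n)$. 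We may assume $n \ge 1$, the case $n = 0$ being immediate since then $\Sigma_0 = 0$ and $z_0 = x_0 + y_0$. As $\cN(g)$ is strictly decreasing, $\lambda := s_{n-1}(g) > 0$, and since the slopes of $\cN(g)$ decrease, for every $k \le n$ we have
\[
v(y_k) \ge \cN(g)(k) \ge \cN(g)(n) + (n-k)\lambda = v(y_n) + (n-k)\lambda.
\]
We also have $v(x_k) \ge \cN(f)(k) \ge \cN(f)(n) > v(y_n)$ for $k \le n$. Writing $I = \sum_k p^k i_k$ and $J = \sum_k p^k j_k$ (so $I + J = p^n$) and summing these bounds over the monomial, its valuation is at least $p^n v(y_n) + \lambda \sum_{k=0}^{n-1} p^k j_k(n-k)$, and also at least $p^n v(y_n) + \bigl(\cN(f)(n) - v(y_n)\bigr) I$. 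If $J > 0$, the first bound exceeds $p^n v(y_n)$ because $n - k \ge 1$ for $k \le n-1$; if $J = 0$, then $I = p^n$ and the second bound is $p^n\cN(f)(n) > p^n v(y_n)$. Hence $v(\Sigma_n) > p^n v(y_n)$.

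Combining the three estimates gives $v(z_n^{p^n}) = v\bigl(y_n^{p^n} + x_n^{p^n} + \Sigma_n\bigr) = p^n v(y_n)$, whence $v(z_n) = v(y_n) = \cN(g)(n)$ and $\cN(g+f)(n) \le v(z_n) = \cN(g)(n)$. I expect the monomial estimate to be the main point: the hypothesis $\cN(g) < \cN(f)$ on $[t_0,\infty)$ --- combined with monotonicity of $\cN(f)$, which is what lets one reach the small indices $k < t_0$ --- is used for the variables $x_k$, while the strict convexity of $\cN(g)$ at the node $n$, encoded in the positive slope $\lambda$, is used for the variables $y_k$; the fact that $\cN(g)$ is \emph{strictly} decreasing is needed precisely to get a strict inequality for the borderline monomials that involve only the $y_k$.
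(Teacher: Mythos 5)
Your proof is correct and follows essentially the same route as the paper: reduce to the $n$-th Teichm\"uller coordinate of $g+f$ via the Witt addition polynomials and Lemma \ref{witt vector addition lemma}, then show $y_n^{p^n}$ has strictly smaller valuation than $x_n^{p^n}$ and every cross term, so the ultrametric inequality pins that coordinate's valuation at $v(y_n)$. The only cosmetic difference is your quantified, case-split estimate of the cross terms via the slope $\lambda = s_{n-1}(g)$, where the paper gets the same strict bound more directly from the fact that each factor $y_k$ ($k<n$) and $x_k$ ($k\le n$) has valuation strictly larger than $v(y_n)$ while the exponent weights sum to $p^n$.
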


\begin{proof}
Since $\cN(g)$ is strictly decreasing and $n$ is a node of $\cN(g)$, we have that
\[
v(y_n) = \cN(g)(n) < v(y_m)
\]
for all $m < n$.  Since $n \geq t_0$ and $\cN(f)$ is decreasing, for all $m \leq n$ we have that
\[
v(y_n) = \cN(g)(n) < \cN(f)(n) \leq v(x_m).
\]
Thus $v(y_n^{p^n}) < v(x_n^{p^n})$ and for any $i_0, j_0, \ldots, i_{n-1}, j_{n-1}$ such that $\sum_{k = 0}^{n-1} p^k(i_k + j_k) = p^n$, it follows that
\[
v\bigl(\prod_{k = 0}^{n-1} y_k^{p^ki_k}x_k^{p^kj_k} \bigr) > p^nv(y_n) = v(y_n^{p^n}).
\]
By Lemma \ref{witt vector addition lemma} it follows that 
\[
v(S_n(y_0, \ldots, y_n^{p^n}; x_0, \ldots, x_n^{p^n})^{p^{-n}}) = v(y_n).
\]
Therefore 
\[
\cN(g+f)(n) \leq v(S_n(y_0, \ldots, y_n^{p^n}; x_0, \ldots, x_n^{p^n})^{p^{-n}}) = v(y_n) = \cN(g)(n).
\]
\end{proof}

\begin{corollary}\label{check hyp3}
If $g \in \mathcal{S}_{n+1}$ and $f \in \A$, then $g+fh_n \in \cS_{n+1}$.
\end{corollary}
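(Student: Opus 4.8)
The plan is to check directly that $g + fh_n$ meets both defining conditions of $\cS_{n+1}$: the positivity $\cN(g+fh_n) > 0$, and $\cN(g+fh_n) \leq \cN(h_{n+1}^{m'})$ for some $m' \geq 1$. For positivity I would note that $g$ and $fh_n$ both lie in the ideal $W(\mathfrak m) = \ker(W(R)\to W(\kappa))$: $\cN(g) > 0$ forces every Teichm\"uller coordinate of $g$ into $\mathfrak m$, and $\cN(fh_n) \geq \cN(h_n) > 0$ by Proposition \ref{multiplication raises np} does the same for $fh_n$; since $W(\mathfrak m)$ is an ideal, $g + fh_n \in W(\mathfrak m)$, i.e.\ $\cN(g+fh_n) > 0$. (Equivalently, Lemma \ref{witt vector addition lemma} shows the $k$-th Teichm\"uller coordinate $c_k$ of $g+fh_n$ satisfies $v(c_k) \geq \min\{\cN(g)(k), \cN(fh_n)(k)\} > 0$ for all $k$.)

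For the second condition, fix $m \geq 1$ with $0 < \cN(g) \leq \cN(h_{n+1}^m)$. Because $\lim_{t\to\infty}\cN(h_{n+1}^m)(t) = 0$ we get $\lim_{t\to\infty}\cN(g)(t) = 0$, and combined with $\cN(g) > 0$ this makes $\cN(g)$ strictly decreasing (no slope may vanish, else $\cN(g)$ would be eventually constant and positive); in particular its slopes decrease strictly infinitely often, so $\cN(g)$ has infinitely many nodes. Combining Proposition \ref{check hyp1} with Proposition \ref{multiplication raises np}, for every $t > 2m^2$ we have
\[
\cN(g)(t) \leq \cN(h_{n+1}^m)(t) < \cN(h_n)(t) \leq \cN(fh_n)(t).
\]
Hence Proposition \ref{Newt polys of sums}, applied with $t_0 = 2m^2 + 1$, gives $\cN(g+fh_n)(n') \leq \cN(g)(n')$ at every node $n'$ of $\cN(g)$ with $n' \geq t_0$.

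It remains to upgrade this to an inequality of functions. Let $n_1$ be the smallest node of $\cN(g)$ that is $\geq t_0$. For any $t \geq n_1$, choose consecutive nodes $a \leq t \leq b$ of $\cN(g)$ with $a \geq n_1$; then $\cN(g)$ is linear on $[a,b]$, while $\cN(g+fh_n)$ is convex and lies below $\cN(g)$ at $a$ and at $b$, hence below their chord, which is $\cN(g)(t)$. Thus $\cN(g+fh_n)(t) \leq \cN(g)(t) \leq \cN(h_{n+1}^m)(t)$ for all $t \geq n_1$, and since $\cN(h_{n+1}^m) > 0$, Lemma \ref{inequalities far out} gives $\cN(g+fh_n) \leq \cN\bigl((h_{n+1}^m)^M\bigr) = \cN(h_{n+1}^{mM})$ for $M$ large. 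Together with the positivity above, this shows $g + fh_n \in \cS_{n+1}$.

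The main obstacle — and the reason the last step is not immediate — is that $\cN(g)$ need not have a node at every integer (for instance $\cN(h_{n+1}^m)$ with $m \geq 2$ does not), so Proposition \ref{Newt polys of sums} only pins down $\cN(g+fh_n)$ on a possibly sparse set of integers; convexity of $\cN(g+fh_n)$ together with the piecewise linearity of $\cN(g)$ between consecutive nodes is exactly what closes the gap. Two small points one must not forget are that $\cN(g)$ has infinitely many nodes (so that $n_1$, and the successive $a,b$, exist) and that $\cN(h_{n+1}^m) > 0$ (so that Lemma \ref{inequalities far out} applies).
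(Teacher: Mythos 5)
Your proof is correct and follows essentially the same route as the paper's: the chain $\cN(g)\le\cN(h_{n+1}^m)<\cN(h_n)\le\cN(fh_n)$ for $t>2m^2$ via Propositions \ref{check hyp1} and \ref{multiplication raises np}, then Proposition \ref{Newt polys of sums}, then Lemma \ref{inequalities far out}. The only difference is that you spell out two points the paper's proof leaves implicit: the positivity of $\cN(g+fh_n)$, and the convexity/chord argument (using that $\cN(g)$ is strictly decreasing with infinitely many nodes and is linear between consecutive nodes) that upgrades the node-wise conclusion of Proposition \ref{Newt polys of sums} to the inequality $\cN(g+fh_n)(t)\le\cN(g)(t)$ for all sufficiently large real $t$, which is what Lemma \ref{inequalities far out} actually requires.
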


\begin{proof}
Since $g \in \cS_{n+1}$, it follows that $\cN(g)$ is strictly decreasing and there exists $m\geq 0$ such that  $\cN(g) \leq \cN(h^m_{n+1})$.  By Proposition \ref{check hyp1} and Proposition \ref{multiplication raises np}, for all $t > 2m^2$ we have 
\[
\cN(g)(t) \leq \cN(h^m_{n+1})(t) <\cN(h_{n})(t)\leq \cN(fh_n)(t).
\]
Note that $\cN(fh_n)$ is strictly decreasing since all slopes of $\cN(h_n)$ occur as slopes of $\cN(fh_n)$.  By Proposition \ref{Newt polys of sums} it follows that for all $t$ sufficiently large, 
\[
\cN(g + fh_n)(t) \leq \cN(g)(t) \leq \cN(h_{n+1}^m)(t).  
\]
By Lemma \ref{inequalities far out} it follows that $g + fh_n \in \cS_{n+1}$.
\end{proof}

\bibliography{AinfReferences}
\bibliographystyle{alpha}

\end{document}